\documentclass{amsart}  

\usepackage{enumitem}
\usepackage{hyperref}

\newcommand*{\mailto}[1]{\href{mailto:#1}{\nolinkurl{#1}}}

\newtheorem{theorem}{Theorem}[section]
\newtheorem{corollary}[theorem]{Corollary}
\newtheorem{lemma}[theorem]{Lemma}

\theoremstyle{definition}

\newtheorem{example}[theorem]{Example}
\newtheorem{remark}[theorem]{Remark}
\newtheorem*{CP}{Coupling problem}

\newcommand{\R}{{\mathbb R}}
\newcommand{\N}{{\mathbb N}}

\newcommand{\C}{{\mathbb C}}

\newcommand{\E}{\mathrm{e}}
\newcommand{\I}{\mathrm{i}}

\newcommand{\res}{\mathrm{res}}
\newcommand{\im}{\mathrm{Im}}
\newcommand{\re}{\mathrm{Re}}

\newcommand{\ledot}{\,\cdot\,}
\newcommand{\redot}{\cdot\,}

\newcommand{\oo}{o}

\numberwithin{equation}{section}

\begin{document}

\title[Long-time asymptotics for the Korteweg--de Vries equation]{Long-time asymptotics for the Korteweg--de Vries equation with integrable reflectionless initial data}

\dedicatory{Dedicated to Fritz Gesztesy on the occasion of his 70th birthday}

\author[J.\ Eckhardt]{Jonathan Eckhardt}
\address{Department of Mathematical Sciences\\ Loughborough University\\ Epinal Way\\ Loughborough\\ Leicestershire LE11 3TU \\ UK}
\email{\mailto{J.Eckhardt@lboro.ac.uk}}


\keywords{Korteweg--de Vries equation, long-time asymptotics, generalized soliton solutions}
\subjclass[2020]{Primary 37K40, 35Q53; Secondary 37K15, 34L25}  

\begin{abstract}
 We show that solutions of the Korteweg--de Vries equation with reflectionless integrable initial data decompose into a (in general infinite) linear superposition of solitons after long enough time. 
 The proof is based on a representation of reflectionless integrable potentials in terms of solutions to symmetric coupling problems for entire functions. 
\end{abstract}

\maketitle

\section{Introduction}

The purpose of this article is to establish long-time asymptotics for solutions of the Korteweg--de Vries equation
\begin{align}
  q_t = 6qq_x - q_{xxx}
\end{align}
on the real line with initial data that is integrable and reflectionless. 
Under the more restrictive assumption that the initial data has a finite first moment, these solutions are precisely the $N$-soliton solutions given explicitly by 
\begin{align}\label{eqnKM}
  q(x,t) = - 2 \frac{d^2}{dx^2} \log\det\biggl(\delta_{ij} + \frac{\gamma_i \gamma_j \E^{4(\kappa_i^3+\kappa_j^3) t-(\kappa_i+\kappa_j)x}}{\kappa_i +\kappa_j} \biggr)_{i, j =1}^N
\end{align}
for some positive parameters $\kappa_1,\ldots,\kappa_N$ and $\gamma_1,\ldots,\gamma_N$. 
For each fixed time $t\in\R$, the function $q(\ledot,t)$ is known to be reflectionless when considered as the potential of the (maximally defined) one-dimensional Schr\"odinger operator
\begin{align}
    - \frac{d^2}{dx^2} + q(\ledot,t)
\end{align}
in $L^2(\R)$.
Moreover, the constants $-\kappa_1^2,\ldots,-\kappa_N^2$ are precisely the negative eigenvalues of this Schr\"odinger operator, independent of time $t$. 

Asymptotically for large times $t$, these $N$-soliton solutions will decompose into a linear superposition of $N$ single solitons. 
More precisely, one has~\cite{ta72, wato72} that  
 \begin{align}\label{eqnqLT}
  q(x,t) = \sum_{n=1}^N \frac{-2\kappa_n^2}{\cosh^2(\kappa_n x-4\kappa_n^3 t - \xi_n)} + \oo(1) 
 \end{align}
  uniformly in $x\in\R$ as $t\rightarrow\infty$, 
 where the phase shifts $\xi_1,\ldots,\xi_N$ are given explicitly in terms of the parameters $\kappa_1,\ldots,\kappa_N$ and $\gamma_1,\ldots,\gamma_N$.
 In fact, much more refined asymptotics can be obtained by now for solutions of the Korteweg--de Vries equation with sufficient spatial decay via the nonlinear steepest descent method for Riemann--Hilbert problems of Deift and Zhou~\cite{dezh93} (see~\cite{deitzh93} for a survey on the history of this problem and~\cite{grte09} for an expository introduction) by representing the solution $q$ in terms of solutions to certain Riemann--Hilbert problems. 

 Various ways have been suggested to generalize classical reflectionless potentials and their corresponding $N$-soliton solutions.
 For example, Marchenko~\cite{ma91} considered locally uniform limits of classical reflectionless potentials and Gesztesy, Karwowski and Zhao~\cite{gekazh92} obtained solutions by passing to the limit $N\rightarrow\infty$ in~\eqref{eqnKM} under suitable conditions on the parameters. 
 Here we will consider the class of integrable potentials (in which case it is still possible to define a reflection coefficient using Jost solutions) that are reflectionless. 
 This class of potentials and corresponding solutions to the Korteweg--de Vries equation have been studied recently by Hryniv, Melnyk and Mykytyuk~\cite{hrmemy21}, where a complete solution of the corresponding inverse spectral problem was obtained. 
 In this case, the number of negative eigenvalues of the underlying Schr\"odinger operator is not finite in general and the corresponding solutions to the Korteweg--de Vries equation are therefore expected to decompose into a potentially infinite superposition of solitons. 
 
 We will prove in Section~\ref{secGSS} that the long-time asymptotics in~\eqref{eqnqLT} indeed continue to hold for these solutions, where in the case of infinitely many eigenvalues uniform convergence of the sum on the right-hand side is guaranteed by a Lieb--Thirring inequality~\cite{we96}. 
  It appears that the only results on long-time asymptotics for solutions of this kind have been derived by Novoksh\"enov~\cite{no92} by means of the dressing method under several additional involved conditions on the spectral data. 
  Due to the lack of sufficiently fast spatial decay of these solutions and therefore the potential presence of infinitely many eigenvalues, using a representation of the solution $q$ in terms of solutions to Riemann--Hilbert problems and subsequently deploying available asymptotic methods does not seem to readily apply here.
  In fact, in our situation the Riemann--Hilbert problems (one can use a modification of the ones in~\cite[Section~2]{grte09} for example) would be required to allow infinitely many pole conditions. 
  On the other side, since we are restricted to the reflectionless case, the jump condition along the real axis actually disappears so that one ends up with discrete Riemann--Hilbert problems similar to the ones considered by Borodin~\cite{bo03, bo00}, however with poles accumulating at a finite point instead of infinity (which complicates the uniqueness aspect of solutions considerably). 
  Instead of such discrete Riemann--Hilbert problems, we will work here with essentially equivalent {\em symmetric coupling problems for entire functions}~\cite{UniSolCP, CouplingProblem}. 
  That reflectionless integrable potentials can always be represented in terms of solutions to symmetric coupling problems will be presented in Section~\ref{secRI}. 
 The key tool we then need is a result about asymptotic behavior of solutions to these coupling problems that will be derived in Section~\ref{secSCP}.
  Our proof is elementary\footnote{We note that it may appear that the proof requires uniqueness of coupling problems established in~\cite{UniSolCP}, which uses tools that are not elementary. 
However, this is not actually necessary as it is enough to know that the solutions in Example~\ref{exaSoliton} are unique, which can be established easily.} (based on the inverse scattering transform) and uses little apart from some basic complex analysis, but does not yield the strong asymptotics obtained via the nonlinear steepest descent method for solutions of the Korteweg--de Vries equation with stronger spatial decay.

\section{Symmetric coupling problems}\label{secSCP}

Let $\sigma$ be a discrete set of nonzero reals such that 
\begin{align}
 \sum_{\lambda\in\sigma} \frac{1}{|\lambda|} < \infty
\end{align}
and define the real entire function $W$ of exponential type zero by
\begin{align}
 W(z) = \prod_{\lambda\in\sigma} \biggl(1-\frac{z}{\lambda}\biggr).
\end{align}
For a sequence $\eta\in\hat{\R}^\sigma$ (called {\em coupling constants} or {\em data}), where $\hat{\R} = \R\cup\{\infty\}$ is the one-point compactification of $\R$, we consider the following task. 

\begin{CP}
Find a pair of real entire functions $(\Phi_-,\Phi_+)$ of exponential type zero such that the three conditions listed below are satisfied:
\begin{enumerate}[label=\emph{(\roman*)}, leftmargin=*, widest=W]
\item[(C)] {\em Coupling condition}: For each $\lambda\in\sigma$ one has\footnote{This condition should be understood as $\Phi_+(\lambda)=0$ when $\eta(\lambda) = \infty$.} 
\begin{align*}
 \Phi_-(\lambda) = \eta(\lambda) \Phi_+(\lambda).
\end{align*}
\item[(G)] {\em Growth and positivity condition}: For every $z\in\C$ with $\im\, z>0$ one has 
\begin{align*}
  \im \biggl( \frac{z \Phi_-(z) \Phi_+(z)}{W(z)}\biggr) \geq 0. 
\end{align*}
\item[(N)] {\em Normalization condition}: At zero one has 
\begin{align*}
 \Phi_-(0) = \Phi_+(0) = 1.
\end{align*}
\end{enumerate}
\end{CP}

If a pair $(\Phi_-,\Phi_+)$ is a solution of the coupling problem with data $\eta$, then the growth and positivity condition~(G) means that the function
\begin{align}\label{eqnGHN}
 \frac{z \Phi_-(z) \Phi_+(z)}{W(z)} 
\end{align}
is a meromorphic Herglotz--Nevanlinna function.
First of all, this guarantees that all zeros of the functions $\Phi_-$ and $\Phi_+$ are real. 
It furthermore entails that the zeros of the function in the numerator of~\eqref{eqnGHN} and the zeros of the function in the denominator of~\eqref{eqnGHN} are interlacing (after possible cancelations).
From this we are able to conclude that the functions $\Phi_-$ and $\Phi_+$ are actually of genus zero and admit the upper bound 
\begin{align}\label{eqnPhiBound}
 |\Phi_\pm(z)| \leq \prod_{\lambda\in\sigma} \biggl( 1 + \frac{|z|}{|\lambda|} \biggr),
\end{align}
 which is independent of the coupling constants $\eta$.  
Condition~(G) also implies that the residues at all poles of the function in~\eqref{eqnGHN} are negative. 
Together with the coupling condition~(C), this shows that        
\begin{align}\label{eqnetaphisign}
   \frac{\eta(\lambda)\Phi_+(\lambda)^2}{\lambda W'(\lambda)} \leq 0 
\end{align}
for all $\lambda\in\sigma$ with $\eta(\lambda)\in\R$. 
Unless it happens that $\lambda$ is a zero of $\Phi_+$, this constitutes the necessary restriction 
 \begin{align}\label{eqnCCcon}
   \frac{\eta(\lambda)}{\lambda W'(\lambda)} \leq 0 
 \end{align}
on the sign of the coupling constant $\eta(\lambda)$ in order for a solution of the coupling problem with data $\eta$ to exist. 
Roughly speaking, the coupling constants are expected to have alternating signs, beginning with non-negative ones for those corresponding to the smallest (in modulus) positive and negative element of $\sigma$. 
Even though this condition on the coupling constants is not necessary for solutions to exist in general, it appears to be reasonable and is satisfied in all applications so far.
 Indeed, it is always sufficient to guarantee solvability of the corresponding coupling problem.
More precisely, the main result in~\cite{UniSolCP} states that the coupling problem with data $\eta$ has a solution if the coupling constants satisfy~\eqref{eqnCCcon} for all $\lambda\in\sigma$ for which $\eta(\lambda)$ is finite.
Furthermore, solutions are always unique and depend continuously on the given data, when the coupling constants are endowed with the product topology. 

In the context of reflectionless integrable potentials, a particular kind of coupling problem arises: 
A coupling problem is called {\em symmetric} if the set $\sigma$ is symmetric about the imaginary axis and the coupling constants $\eta$ satisfy 
\begin{align}
 \eta(-\lambda)=\eta(\lambda)^{-1}
\end{align}
 for each $\lambda\in\sigma$, where $x\mapsto x^{-1}$ is defined on $\hat{\R}$ by continuous extension.

\begin{lemma}
  If a pair $(\Phi_-,\Phi_+)$ is the solution of a symmetric coupling problem, then one has $\Phi_-(z) = \Phi_+(-z)$. 
\end{lemma}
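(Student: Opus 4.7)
The plan is to prove the identity by a symmetry/uniqueness argument. Namely, I would define the auxiliary pair
\begin{align*}
 \tilde\Phi_-(z) := \Phi_+(-z), \qquad \tilde\Phi_+(z) := \Phi_-(-z),
\end{align*}
and show that $(\tilde\Phi_-,\tilde\Phi_+)$ is again a solution of the \emph{same} coupling problem. Since solutions are unique (for the relevant cases, cf.\ the footnote), this forces $\tilde\Phi_\pm = \Phi_\pm$ and yields the claim. The required regularity is immediate: each $\tilde\Phi_\pm$ is real entire of exponential type zero because $\Phi_\pm$ is, and $z\mapsto -z$ preserves both properties. Likewise (N) is trivial from $\tilde\Phi_\pm(0) = \Phi_\mp(0) = 1$.

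To check the coupling condition (C), I would pick $\lambda\in\sigma$ and use the symmetry of $\sigma$ to note that $-\lambda\in\sigma$ as well. Then applying (C) for the original pair at $-\lambda$ gives $\Phi_-(-\lambda) = \eta(-\lambda)\Phi_+(-\lambda) = \eta(\lambda)^{-1}\Phi_+(-\lambda)$, which rearranges precisely to $\tilde\Phi_-(\lambda) = \Phi_+(-\lambda) = \eta(\lambda)\Phi_-(-\lambda) = \eta(\lambda)\tilde\Phi_+(\lambda)$. The edge cases $\eta(\lambda)\in\{0,\infty\}$ work out by the continuous extension of $x\mapsto x^{-1}$ on $\hat\R$.

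The main (but still routine) obstacle is verifying the growth and positivity condition (G) for the flipped pair. Here I would first observe that symmetry of $\sigma$ makes $W$ even: since the product $\prod_{\lambda\in\sigma}(1-z/\lambda)$ converges absolutely, reindexing $\lambda\mapsto -\lambda$ gives $W(-z) = W(z)$. Writing $F(z) = z\Phi_-(z)\Phi_+(z)/W(z)$, this yields
\begin{align*}
 \frac{z\,\tilde\Phi_-(z)\tilde\Phi_+(z)}{W(z)} = \frac{z\,\Phi_+(-z)\Phi_-(-z)}{W(-z)} = -F(-z).
\end{align*}
For $z$ in the open upper half-plane, the point $\overline{-z}$ again lies in the upper half-plane, and since $F$ is real meromorphic, $F(-z) = \overline{F(\overline{-z})}$, hence $\im F(-z) = -\im F(\overline{-z}) \leq 0$ by hypothesis (G) on $F$. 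Thus $\im(-F(-z)) \geq 0$, as needed.

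Having verified (C), (G) and (N) for $(\tilde\Phi_-,\tilde\Phi_+)$, the uniqueness of solutions to the coupling problem gives $\Phi_-(z) = \tilde\Phi_-(z) = \Phi_+(-z)$, completing the proof.
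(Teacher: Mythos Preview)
Your proof is correct and follows exactly the same approach as the paper: define the reflected pair and invoke uniqueness. The paper's proof merely asserts that the reflected pair solves the same coupling problem without spelling out the verification of (C), (G), (N); you have supplied those details carefully and correctly (in particular, the argument for (G) via evenness of $W$ and the reflection $F(-z)=\overline{F(\overline{-z})}$ is right).
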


\begin{proof}
  For a solution $(\Phi_-,\Phi_+)$ of a symmetric coupling problem, the pair $(\Psi_-,\Psi_+)$ defined by $\Psi_-(z) = \Phi_+(-z)$ and $\Psi_+(z) = \Phi_-(-z)$ is again a solution of the same coupling problem. 
  Now the claim follows from uniqueness of solutions to coupling problems established in~\cite{UniSolCP}.   
\end{proof}

In the case when the set $\sigma$ consists of a single pair of points, it is possible to solve symmetric coupling problems explicitly.  
We will see that this corresponds to reflectionless integrable potentials with precisely one negative eigenvalue. 

\begin{example}\label{exaSoliton}
 Suppose that $\sigma = \lbrace -\lambda_0, \lambda_0 \rbrace$ for some positive $\lambda_0\in\R$, so that 
 \begin{align}
   W(z) = 1-\frac{z^2}{\lambda_0^2}. 
 \end{align}
 For such a set $\sigma$, a symmetric coupling problem with data $\eta\in\hat{\R}^\sigma$ is solvable if and only if the coupling constant $\eta(\lambda_0)$ is not a negative real number. 
 In this case, the solution $(\Phi_-,\Phi_+)$ of the symmetric coupling problem is given by
 \begin{align}
  \Phi_\pm(z) = 1 \pm \frac{z}{\lambda_0} \frac{1-\eta(\lambda_0)}{1+\eta(\lambda_0)},
 \end{align}
 where the last fraction has to be interpreted as minus one when $\eta(\lambda_0)$ is not finite.
 A straightforward computation then yields the identity (here, the last fraction has to be interpreted as zero when $\eta(\lambda_0)$ is equal to zero or infinity)    
  \begin{align}
    \frac{\Phi_-(z)\Phi_+(z)}{W(z)} & 
     = 1 + \frac{z^2}{\lambda_0^2-z^2} \frac{1}{\cosh^2\bigl(\frac{1}{2}\log\eta(\lambda_0)\bigr)},
 \end{align}
 hinting at the connection to reflectionless integrable potentials.
\end{example}

Other simple cases of symmetric coupling problems that can be solved explicitly are ones for which each of the coupling constants is either zero or infinity. 

\begin{example}\label{exaCPzeroinf}
 Consider a symmetric coupling problem with data $\eta\in\hat{\R}^\sigma$ such that $\eta(\lambda)$ is either zero or infinity for each $\lambda\in\sigma$.
 In this case, the solution $(\Phi_-,\Phi_+)$ of the symmetric coupling problem is given by
 \begin{align}
  \Phi_\pm(z) = \prod_{\lambda\in\sigma_0}\biggl(1\pm\frac{z}{\lambda}\biggr), 
 \end{align}
 where $\sigma_0=\{\lambda\in\sigma\,|\, \eta(\lambda) = 0\}$, so that one has 
  \begin{align}
    \frac{\Phi_-(z)\Phi_+(z)}{W(z)} = 1.
 \end{align}
\end{example}

In conjunction with continuous dependence of solutions on the coupling constants established in~\cite{UniSolCP}, Example~\ref{exaCPzeroinf} yields the following result that has also been proved in a different way in~\cite[Corollary~3.2]{CouplingProblem} or can be deduced as a special case of Theorem~\ref{thmAA} below. 

\begin{corollary}\label{corAA}
  For every $k\in\N$, let $\eta_k\in\hat{\R}^\sigma$ be such that the coupling problem with data $\eta_k$ is symmetric and has a solution $(\Phi_{k,-},\Phi_{k,+})$. 
 If $\eta_k(\lambda)$ converges to zero or infinity as $k\rightarrow\infty$ for each $\lambda\in\sigma\cap(0,\infty)$, then one has  
 \begin{align}
  \frac{\Phi_{k,-}(z)\Phi_{k,+}(z)}{W(z)} \rightarrow 1
 \end{align}
 locally uniformly for all $z\in\C\backslash\sigma$. 
\end{corollary}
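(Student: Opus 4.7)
The plan is a normal-families compactness argument combined with Example~\ref{exaCPzeroinf}, using only elementary properties of Herglotz--Nevanlinna functions so as to pin down the limiting quotient without invoking the general uniqueness theorem for coupling problems.

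By the uniform bound~\eqref{eqnPhiBound} and convergence of $\sum_{\lambda \in \sigma} 1/|\lambda|$, the family $\{\Phi_{k,\pm}\}$ is locally uniformly bounded on $\C$, hence normal. It therefore suffices to show that every subsequence admits a further subsequence along which the desired convergence holds. Passing to such a subsequence, I may assume $\Phi_{k,\pm}\to\Phi_{\infty,\pm}$ locally uniformly on $\C$; the hypothesis together with the symmetry $\eta_k(-\lambda)=\eta_k(\lambda)^{-1}$ already ensures that $\eta_\infty(\lambda):=\lim_k\eta_k(\lambda)\in\{0,\infty\}$ exists for every $\lambda\in\sigma$.

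I would next show that the quotient $\Phi_{\infty,-}\Phi_{\infty,+}/W$ extends to an entire function. Passing to the limit in $\Phi_{k,-}(\lambda)=\eta_k(\lambda)\Phi_{k,+}(\lambda)$ (and, dually, in its reciprocal form), together with boundedness of $\Phi_{k,\pm}(\lambda)$, yields $\Phi_{\infty,-}(\lambda)=0$ whenever $\eta_\infty(\lambda)=0$ and $\Phi_{\infty,+}(\lambda)=0$ whenever $\eta_\infty(\lambda)=\infty$. Writing $\sigma_0=\{\lambda:\eta_\infty(\lambda)=0\}$ and $\sigma_\infty=\{\lambda:\eta_\infty(\lambda)=\infty\}$, the factorization $W(z)=\prod_{\lambda\in\sigma_0}(1-z/\lambda)\prod_{\lambda\in\sigma_\infty}(1-z/\lambda)$ combined with these vanishing conditions shows that every pole of $1/W$ at $\sigma$ is cancelled by a zero of $\Phi_{\infty,-}\Phi_{\infty,+}$, so the quotient is entire.

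Finally, I would identify this entire function as the constant $1$. The Herglotz--Nevanlinna property of $z\Phi_{k,-}(z)\Phi_{k,+}(z)/W(z)$ passes to the locally uniform limit $G(z):=z\Phi_{\infty,-}(z)\Phi_{\infty,+}(z)/W(z)$, and by the previous step $G$ is entire and real on $\R$. The Nevanlinna integral representation then collapses (no poles, hence zero measure part) to $G(z)=\alpha z+\beta$ with $\alpha\geq 0$ and $\beta\in\R$. Since $G(0)=0$, one gets $\beta=0$; and computing $G'(0)=\Phi_{\infty,-}(0)\Phi_{\infty,+}(0)/W(0)=1$ using the normalization gives $\alpha=1$. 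Hence $\Phi_{\infty,-}\Phi_{\infty,+}/W\equiv 1$ along the subsequence, which transfers to convergence of the full sequence. The main obstacle is this final identification step, where one must avoid appealing to the general uniqueness result of~\cite{UniSolCP}; the Nevanlinna-representation argument handles it cleanly provided the pole cancellations at $\sigma$ have been verified and one invokes the classical fact that an entire Herglotz--Nevanlinna function is affine.
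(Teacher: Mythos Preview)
Your argument is correct and is essentially a self-contained variant of what the paper does. The paper does not give a standalone proof of this corollary: it simply remarks that the statement follows from Example~\ref{exaCPzeroinf} together with the continuous-dependence (hence uniqueness) result of~\cite{UniSolCP}, or alternatively as the degenerate case of Theorem~\ref{thmAA}, whose proof follows the same compactness scheme you use but identifies the limit by recognizing it as the unique solution of a simpler coupling problem. Your route differs only in the final identification step: instead of invoking uniqueness of the limiting coupling problem (Example~\ref{exaCPzeroinf}), you observe directly that $G(z)=z\,\Phi_{\infty,-}(z)\Phi_{\infty,+}(z)/W(z)$ is an entire real Herglotz--Nevanlinna function, hence affine, and the normalizations force $G(z)=z$. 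This buys you exactly what the paper's footnote alludes to: the general uniqueness theorem from~\cite{UniSolCP} is not needed, and in fact your argument simultaneously \emph{proves} uniqueness (at the level of the product $\Phi_-\Phi_+$) for the coupling problems of Example~\ref{exaCPzeroinf}. The trade-off is that your approach only pins down the quotient $\Phi_{\infty,-}\Phi_{\infty,+}/W$, not the individual factors $\Phi_{\infty,\pm}$, but that is all the corollary requires.
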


In order to derive uniform long-time asymptotics for solutions of the Korteweg--de Vries equation with reflectionless integrable initial data later, we will need the following asymptotics for solutions to symmetric coupling problems in the situation when all but one pair of coupling constants tend to zero or infinity. 
Just like in Example~\ref{exaSoliton}, we are going to interpret the fraction 
\begin{align}
\frac{1}{\cosh^2\bigl(\frac{1}{2}\log|\eta|\bigr)}
\end{align}
 as a function in $\eta$ continuously extended to $\hat{\R}$. 

\begin{theorem}\label{thmAA}
  For every $k\in\N$, let $\eta_k\in\hat{\R}^\sigma$ be such that the coupling problem with data $\eta_k$ is symmetric and has a solution $(\Phi_{k,-},\Phi_{k,+})$. 
 If there is a partition $\sigma_0 \cup \lbrace\lambda_0\rbrace \cup \sigma_\infty$ of $\sigma\cap(0,\infty)$  such that   
 \begin{align}\label{eqnConvCC}
    \eta_{k}(\lambda) \rightarrow  \begin{cases} 0, & \text{for all }\lambda\in\sigma_0, \\ \infty, &  \text{for all }\lambda\in\sigma_\infty, \end{cases} 
 \end{align}
 as $k\rightarrow\infty$, then  one has the asymptotics 
 \begin{align}\label{eqnConv}
  \frac{\Phi_{k,-}(z)\Phi_{k,+}(z)}{W(z)} = 1 + \frac{z^2}{\lambda_0^2-z^2} \frac{1}{\cosh^2\bigl(\frac{1}{2}\log|\hat{\eta}_k|\bigr)} + \oo(1)
 \end{align}
  locally uniformly for all $z\in\C\backslash\sigma$, where the $\hat{\eta}_k\in\hat{\R}$ are given by 
 \begin{align}
  \hat{\eta}_{k} = \eta_{k}(\lambda_0) \prod_{\lambda\in\sigma_0} \frac{\lambda + \lambda_0}{\lambda - \lambda_0} \prod_{\lambda\in\sigma_\infty} \frac{\lambda - \lambda_0}{\lambda + \lambda_0}
 \end{align}
 when $\eta_k(\lambda_0)$ is finite and by $\hat{\eta}_k = \infty$ otherwise. 
\end{theorem}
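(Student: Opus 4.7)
The plan is to extract a convergent subsequence via normal families, identify the limiting pair as the solution of a reduced symmetric coupling problem on $\{-\lambda_0,\lambda_0\}$, and then read off the asymptotics from the explicit formula of Example~\ref{exaSoliton}. The enabling feature is that the bound \eqref{eqnPhiBound} on $\Phi_{k,\pm}$ is uniform in $k$.

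First I would observe that \eqref{eqnPhiBound} makes $\{\Phi_{k,\pm}\}_k$ a normal family on $\C$, while $\hat{\R}$ is compact and the product factors in the definition of $\hat{\eta}_k$ are independent of $k$ and converge absolutely (using $\sum 1/|\lambda|<\infty$). Thus every subsequence admits a further subsequence $k_j$ along which $\eta_{k_j}(\lambda_0)\to\eta_\infty\in\hat{\R}$, $\hat{\eta}_{k_j}\to\hat{\eta}_\infty\in\hat{\R}$, and $\Phi_{k_j,\pm}\to\Phi_\pm$ locally uniformly, where $\Phi_\pm$ are entire of genus zero and still satisfy \eqref{eqnPhiBound}. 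A standard contradiction argument then reduces the theorem to verifying the claimed identity along each such subsequence.

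Passing conditions (C) and (G) to the limit forces $\Phi_-$ to vanish on $\sigma_0\cup(-\sigma_\infty)$, $\Phi_+$ to vanish on $(-\sigma_0)\cup\sigma_\infty$, and $\Phi_-(\lambda_0)=\eta_\infty\Phi_+(\lambda_0)$ (to be read as $\Phi_+(\lambda_0)=0$ when $\eta_\infty=\infty$). Factoring out these forced zeros,
\begin{align*}
 \Phi_-(z) &= A_-(z)\prod_{\lambda\in\sigma_0}\biggl(1-\frac{z}{\lambda}\biggr)\prod_{\lambda\in\sigma_\infty}\biggl(1+\frac{z}{\lambda}\biggr), \\
 \Phi_+(z) &= A_+(z)\prod_{\lambda\in\sigma_0}\biggl(1+\frac{z}{\lambda}\biggr)\prod_{\lambda\in\sigma_\infty}\biggl(1-\frac{z}{\lambda}\biggr),
\end{align*}
with $A_\pm$ entire of genus zero. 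Combined with the factorization $W(z)=(1-z^2/\lambda_0^2)\prod_{\lambda\in\sigma_0\cup\sigma_\infty}(1-z^2/\lambda^2)$, this yields
\begin{align*}
 \frac{z\Phi_-(z)\Phi_+(z)}{W(z)} = \frac{zA_-(z)A_+(z)}{1-z^2/\lambda_0^2},
\end{align*}
so $(A_-,A_+)$ inherits condition (G) on the reduced set $\{-\lambda_0,\lambda_0\}$; the interlacing property after cancellation then forces any remaining zeros of $A_\pm$ to lie in $\{-\lambda_0,\lambda_0\}$, so that $A_\pm$ are polynomials. Inserting the factorization into the coupling condition at $\lambda_0$ yields exactly $A_-(\lambda_0)=\hat{\eta}_\infty A_+(\lambda_0)$, identifying $(A_-,A_+)$ as the unique solution of the symmetric two-point coupling problem on $\{-\lambda_0,\lambda_0\}$ with data $\hat{\eta}_\infty$. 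The explicit formula of Example~\ref{exaSoliton} (or Example~\ref{exaCPzeroinf} in the degenerate cases $\hat{\eta}_\infty\in\{0,\infty\}$) therefore gives
\begin{align*}
 \frac{\Phi_-(z)\Phi_+(z)}{W(z)} = 1 + \frac{z^2}{\lambda_0^2-z^2}\frac{1}{\cosh^2\bigl(\frac{1}{2}\log|\hat{\eta}_\infty|\bigr)},
\end{align*}
and continuity on $\hat{\R}$ of $\eta\mapsto 1/\cosh^2(\frac{1}{2}\log|\eta|)$ identifies this with the limit of the explicit term on the right-hand side of \eqref{eqnConv}.

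I expect the main delicate point to be showing that $\hat{\eta}_\infty$ actually lies in the admissible range for Example~\ref{exaSoliton} to apply. The sign constraint \eqref{eqnCCcon} at $\lambda_0$ for the full problem needs to be tracked through the signs of the extracted factors $(\lambda\pm\lambda_0)/(\lambda\mp\lambda_0)$ over $\sigma_0$ and $\sigma_\infty$; both contributions turn out to count the elements of $\sigma_0\cup\sigma_\infty$ lying below $\lambda_0$ and hence cancel, forcing $\hat{\eta}_k\in[0,\infty]$ and justifying the absolute value in the statement. The remaining issues — the degenerate cases $\eta_\infty\in\{0,\infty\}$, the preservation of conditions (C), (G), (N) under locally uniform limits, and the interlacing argument reducing $A_\pm$ to polynomials — should be routine.
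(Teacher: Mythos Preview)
Your proposal is correct and follows essentially the same route as the paper: extract a locally uniformly convergent subsequence via the uniform bound~\eqref{eqnPhiBound} and Montel's theorem, pass the coupling and positivity conditions to the limit to force the zeros on $\sigma_0\cup(-\sigma_\infty)$ and $(-\sigma_0)\cup\sigma_\infty$, factor them out, and identify the quotient pair as the unique solution of the two-point symmetric coupling problem of Example~\ref{exaSoliton} with data $\hat{\eta}_\infty$. The paper handles your two ``delicate points'' more tersely---the polynomiality of $A_\pm$ is absorbed into the uniqueness statement for the reduced coupling problem, and the admissibility of $\hat{\eta}_\infty$ is implicit since you have already exhibited $(A_-,A_+)$ as a solution, which by Example~\ref{exaSoliton} forces $\hat{\eta}_\infty\notin(-\infty,0)$.
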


\begin{proof}
 By means of a compactness argument (using the uniform bound in~\eqref{eqnPhiBound} and Montel's theorem), we may assume that the entire functions $\Phi_{k,-}$ and $\Phi_{k,+}$ converge locally uniformly and that the coupling constants $\eta_k(\lambda_0)$ converge to some $\eta_\infty$.
The respective limits of $\Phi_{k,-}$ and $\Phi_{k,+}$ are real entire functions of exponential type zero in view of~\eqref{eqnPhiBound} and will be denoted by $\Psi_-$ and $\Psi_+$. 
Due to the assumption~\eqref{eqnConvCC} and the coupling condition obeyed by the pairs $(\Phi_{k,-},\Phi_{k,+})$, we see that $\Psi_\pm(\lambda)=0$ whenever $\mp\lambda\in\sigma_0$ or $\pm\lambda\in\sigma_\infty$. 
One is therefore able to define real entire functions $P_-$ and $P_+$ of exponential type zero (see~\cite[Section~I.9]{le80}) such that  
\begin{align*}
 \Psi_\pm(z) = P_\pm(z) \prod_{\lambda\in\sigma_0} \biggl( 1\pm\frac{z}{\lambda} \biggr) \prod_{\lambda\in\sigma_\infty} \biggl( 1\mp\frac{z}{\lambda} \biggr).
\end{align*}
It follows that the pair $(P_-,P_+)$ satisfies the coupling conditions
\begin{align*}
 P_-(\lambda_0) & = \hat{\eta}_\infty  P_+(\lambda_0), & P_-(-\lambda_0) & = \hat{\eta}_\infty^{-1}  P_+(-\lambda_0),
\end{align*}
where the coupling constant $\hat{\eta}_\infty\in\hat{\R}$ is given by 
\begin{align*}
 \hat{\eta}_\infty = \eta_\infty \prod_{\lambda\in\sigma_0} \frac{\lambda + \lambda_0}{\lambda - \lambda_0} \prod_{\lambda\in\sigma_\infty} \frac{\lambda - \lambda_0}{\lambda + \lambda_0}  
\end{align*}
 when $\eta_\infty$ is finite and by $\hat{\eta}_\infty = \infty$ otherwise. 
Moreover, one has that 
\begin{align*}
  \im\biggl(zP_-(z)P_+(z) \biggl(1-\frac{z^2}{\lambda_0^2}\biggr)^{-1}\biggr) = \im\biggl(\frac{z\Psi_-(z) \Psi_+(z)}{W(z)} \biggr) \geq 0
\end{align*}
for all $z\in\C$ with $\im\,z >0$. 
As the functions $P_-$ and $P_+$ are clearly normalized at zero, we conclude that the pair $(P_-,P_+)$ is the unique solution of a symmetric coupling problem as in Example~\ref{exaSoliton}.  
Since this can be solved explicitly, we get that 
\begin{align*}
 \frac{\Phi_{k,-}(z) \Phi_{k,+}(z)}{W(z)} \rightarrow  1 + \frac{z^2}{\lambda_0^2-z^2} \frac{1}{\cosh^2\bigl(\frac{1}{2}\log|\hat{\eta}_\infty|\bigr)}
\end{align*}
 locally uniformly for all $z\in\C\backslash\sigma$. 
 Because the $\hat{\eta}_k$ converge to $\hat{\eta}_\infty$, one also has  
\begin{align*}
 \frac{z^2}{\lambda_0^2-z^2} \frac{1}{\cosh^2\bigl(\frac{1}{2}\log|\hat{\eta}_k|\bigr)} \rightarrow \frac{z^2}{\lambda_0^2-z^2} \frac{1}{\cosh^2\bigl(\frac{1}{2}\log|\hat{\eta}_\infty|\bigr)} 
\end{align*}
 locally uniformly for all $z\in\C\backslash\sigma$, which yields the claimed asymptotics.  
\end{proof}

\section{Reflectionless integrable potentials}\label{secRI}

We are next going to demonstrate that reflectionless integrable potentials can be represented in terms of solutions to symmetric coupling problems, where the set $\sigma$ is determined by the negative eigenvalues of the corresponding Schr\"odinger operator and the data is given in terms of associated norming constants. 
To this end, we briefly recall the most basic facts about scattering theory for one-dimensional Schr\"odinger operators with integrable potentials first. 
These can be found in references like~\cite{detr79, ma11, ya10}, although they usually suppose slightly stronger decay assumptions on the potential.
 However, the minimal facts we require here continue to hold when the potential is merely integrable and we refer to~\cite{hrmy21} and~\cite[Section~2]{hrmemy21} in particular, which contain precisely what we are going to need in this section. 

Given a real-valued potential $q\in L^1(\R)$, for each $k$ in the open upper complex half-plane $\C_+$ there is a unique solution $f_\pm(k,\redot)$, called the {\em right/left Jost solution}, of the differential equation 
\begin{align}
  -f'' + qf = k^2 f
\end{align}
with the spatial asymptotics 
\begin{align}
  f_\pm(k,x) \sim \E^{\pm\I kx}
\end{align}
as $x\rightarrow\pm\infty$. 
In particular, the solution $f_\pm(k,\redot)$ is the unique (up to constant multiples) nontrivial solution that is square integrable near $\pm\infty$. 
With these Jost solutions, the {\em transmission coefficient} $T$ can be defined by 
\begin{align}
  T(k) = \frac{2\I k}{f_+'(k,x)f_-(k,x) - f_+(k,x)f_-'(k,x)},
\end{align}
 which is a meromorphic function on $\C_+$ with a continuous extension to $\R$ except for possibly zero. 
All poles of $T$ lie on the imaginary axis and can be enumerated by $\I\kappa_1,\I\kappa_2,\ldots$ for some positive $\kappa_n$. 
The number of poles may be finite or infinite and will be denoted by $N\in\N\cup\{0,\infty\}$. 
In case there are infinitely many poles, they satisfy 
\begin{align}
  \sum_{n=1}^\infty \kappa_n < \infty, 
\end{align}
so that $\kappa_n\rightarrow0$ necessarily and we may assume that the $\kappa_n$ are strictly decreasing.
For each pole $\I\kappa_n$, the Jost solutions $f_-(\I\kappa_n,\redot)$ and $f_+(\I\kappa_n,\redot)$ are linearly dependent with 
\begin{align}\label{eqnJostCoup}
  f_-(\I\kappa_n,x) = c_n f_+(\I\kappa_n,x)
\end{align}
for some nonzero constant $c_n\in\R$. 
In particular, this guarantees that the Jost solutions are square integrable in this case, so that $-\kappa_n^2$ is a negative eigenvalue of the maximally defined Schrödinger operator 
\begin{align}
  - \frac{d^2}{dx^2} + q
\end{align}
in $L^2(\R)$. 
Conversely, every negative eigenvalue of this operator corresponds to a pole of $T$ in this way. 
Even though we will not need them here, let us mention that the usual associated right/left norming constants $\gamma_{\pm,n}>0$ are defined by 
\begin{align}
  \gamma_{\pm,n}^{-2} = \int_\R f_\pm(\I\kappa_n,x)^2 dx.
\end{align}
Since they are related to the constants $c_n$ via the transmission coefficient $T$ by 
\begin{align}\label{eqnCoupNorm}
  c_n^{\mp 1} \gamma_{\pm,n}^{2}= -\I\,\res_{\I\kappa_n} T,
\end{align}
the norming constants $\gamma_{\pm,n}$ can be used interchangeably instead of the constants $c_n$ (assuming that the transmission coefficient $T$ is known). 

In this article, we are concerned with {\em reflectionless integrable potentials}, which may be defined as follows: 
A {\em reflectionless integrable potential} is a real-valued function $q\in L^1(\R)$ such that the corresponding transmission coefficient $T$ satisfies $|T(k)|=1$ for all nonzero real $k$.
It is known (see~\cite[Corollary~5.3]{hrmemy21}) that $q$ has a continuous representative in this case, which we are going to use in the following. 

\begin{theorem}
 Let $q$ be a reflectionless integrable potential and define the discrete set $\sigma$ of nonzero reals by 
 \begin{align}
   \sigma = \biggl\{-\frac{1}{\kappa_n},\,\frac{1}{\kappa_n}\,\bigg|\, n=1,\ldots,N\biggr\}.
 \end{align}
 For each given point $x\in\R$ one has  
\begin{align}\label{eqnqRep}
  q(x) = - \frac{d^2}{dz^2} \frac{\Phi_-(z)\Phi_+(z)}{W(z)}\bigg|_{z=0},
\end{align}
where the pair $(\Phi_-,\Phi_+)$ is the solution of the symmetric coupling problem with data $\eta_x$ given by 
\begin{align}\label{eqnCCetax}
  \eta_x\biggl(\frac{1}{\kappa_n}\biggr) & = c_n \E^{-2\kappa_n x}.  
\end{align} 
\end{theorem}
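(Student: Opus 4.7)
The plan is to write down explicit candidates for the pair $(\Phi_-,\Phi_+)$ directly in terms of the Jost solutions, verify that they satisfy the three axioms of the coupling problem (so that by uniqueness they must coincide with the solution), and then read off $q(x)$ from the Taylor expansion at $z=0$ via the classical high-energy expansion of the diagonal Green's function.

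For each fixed $x\in\R$ I would set
\begin{align*}
 \Phi_+(z) = W_-(z)\,\E^{-\I k x} f_+(k,x)\Big|_{k=\I/z}, \qquad \Phi_-(z) = W_-(z)\,\E^{\I k x} f_-(k,x)\Big|_{k=\I/z},
\end{align*}
where $W_-(z)=\prod_n(1+\kappa_n z)$ has its zeros precisely at the negative points $-1/\kappa_n$ of $\sigma$. The $1$-soliton case serves as a sanity check: plugging in the explicit Jost solutions for $q(x)=-2\kappa^2/\cosh^2(\kappa x-\xi)$ recovers the pair $\Phi_\pm(z)=1\pm z\kappa\tanh(\kappa x-\xi)$ of Example~\ref{exaSoliton} after identifying $\eta_x(1/\kappa)=c\,\E^{-2\kappa x}$ with $c=\E^{2\xi}$.

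The required checks are then largely routine. Reflectionlessness, in the form $f_-(k,x)=f_+(-k,x)/T(k)$ combined with the Blaschke-product expression $T(k)=\prod_n(k+\I\kappa_n)/(k-\I\kappa_n)$, extends $\E^{\mp\I kx}f_\pm(k,x)$ meromorphically to all of $\C$ with simple poles only at $k=-\I\kappa_n$, and these are exactly cancelled by the zeros of $W_-$ under the substitution $k=\I/z$, so each $\Phi_\pm$ is entire. The bound~\eqref{eqnPhiBound}, which one obtains from the Marchenko integral representations for $f_\pm$ together with $\sum\kappa_n<\infty$, then gives exponential type zero; normalization is immediate from the Jost asymptotic $\E^{\mp\I kx}f_\pm(k,x)\to 1$ as $k\to\infty$ in $\overline{\C_+}$. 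The coupling condition at $z=1/\kappa_n$ is an immediate consequence of $f_-(\I\kappa_n,x)=c_n f_+(\I\kappa_n,x)$ and the common prefactor $W_-(1/\kappa_n)$, yielding $\Phi_-(1/\kappa_n)/\Phi_+(1/\kappa_n)=c_n\E^{-2\kappa_n x}$; the coupling at $-1/\kappa_n$ then follows from the parity $\Phi_-(z)=\Phi_+(-z)$. For the growth/positivity condition the key identity is
\begin{align*}
 \frac{z\,\Phi_-(z)\,\Phi_+(z)}{W(z)} = -2\,G(\I/z,x,x),
\end{align*}
which follows from $T(\I/z)=W_-(z)/W_+(z)$ together with the Wronskian formula $W[f_+,f_-]=2\I k/T$; the Herglotz--Nevanlinna character on $\C_+$ can then be read off from the (automatically negative) residues at the real poles $\pm 1/\kappa_n$, which encode the classical sign alternation of the norming constants $c_n$ for reflectionless potentials.

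Finally, the identity~\eqref{eqnqRep} is extracted from the standard high-energy expansion
\begin{align*}
 G(k,x,x)=\frac{1}{2\I k}\biggl(1+\frac{q(x)}{2k^2}+\OO(k^{-4})\biggr)
\end{align*}
(obtained by iterating the Lippmann--Schwinger equation against the free Green's function): substituting $k=\I/z$ and dividing by $z$ gives $\Phi_-(z)\Phi_+(z)/W(z)=1-\tfrac{1}{2}q(x)\,z^2+\OO(z^4)$, so that applying $-d^2/dz^2$ at $z=0$ yields exactly $q(x)$. The main obstacle I anticipate is in fact the very first step -- rigorously justifying the meromorphic extension to $\C$ and the exponential-type-zero estimates for $\Phi_\pm$ under only the hypotheses $q\in L^1(\R)$ and reflectionless, where the rational-function structure of the $N$-soliton case is no longer available; this is precisely where one has to invoke the sharper analytic results on Jost solutions for merely integrable reflectionless potentials established in~\cite{hrmy21,hrmemy21}.
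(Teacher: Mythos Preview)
Your strategy coincides with the paper's: define $\Phi_\pm$ as $W_-$ times the normalized Jost solutions under the substitution $k=\I/z$, verify conditions (C), (G), (N), and read off $q(x)$ from the high-energy expansion of the diagonal Green's function. The paper short-circuits your anticipated first obstacle by quoting the factorization
\[
 f_\pm(k,x) = \E^{\pm\I k x} \prod_{n=1}^N  \frac{k\mp\I\mu_n(x)}{k+\I\kappa_n}
\]
from \cite[Section~5.5]{hrmemy21} (with $\sum_n|\mu_n(x)|<\infty$), which immediately gives $\Phi_\pm(z)=\prod_n(1\mp z\mu_n(x))$ as genus-zero canonical products; entirety, reality and exponential type zero are then automatic, and no meromorphic-continuation argument or Marchenko estimate is needed.

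There is, however, a genuine gap in your verification of condition~(G). Negative residues at the real poles of $z\Phi_-\Phi_+/W$ are necessary but not sufficient for it to be a Herglotz--Nevanlinna function: you would still have to rule out a non-Herglotz entire additive part, which amounts to controlling the function at infinity (equivalently, the \emph{low}-energy behaviour of the Green's function), and this you have not done. The paper sidesteps the issue entirely by rewriting
\[
 \frac{z\Phi_-(z)\Phi_+(z)}{W(z)} = -2\biggl(\frac{f_+'(\I/z,x)}{f_+(\I/z,x)} - \frac{f_-'(\I/z,x)}{f_-(\I/z,x)}\biggr)^{-1}
\]
and invoking the standard Herglotz--Nevanlinna property of the half-line Weyl--Titchmarsh quotients $f_\pm'/f_\pm$ (with \cite[Lemma~1.1]{humBre16} cited as an alternative reference). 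The sign of the imaginary part then follows directly, without any residue computation or appeal to the oscillation-theoretic sign alternation of the $c_n$. You should replace your residue argument with this one.
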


\begin{proof}
For a reflectionless integrable potential $q$, the transmission coefficient $T$ is given by the Blaschke product 
\begin{align*}
   T(k) = \prod_{n=1}^N  \frac{k+\I\kappa_n}{k-\I\kappa_n}
\end{align*}
and when $x\in\R$ is fixed, the Jost functions $f_\pm(\ledot,x)$ can be factorized (see~\cite[Section~5.5]{hrmemy21}) as
\begin{align*}
 f_\pm(k,x) = \E^{\pm\I k x} \prod_{n=1}^N  \frac{k\mp\I\mu_n(x)}{k+\I\kappa_n},
\end{align*}
where $\mu_1(x),\mu_2(x),\ldots$ is a sequence of nonzero real numbers with 
\begin{align*}
  \sum_{n=1}^N |\mu_n(x)| < \infty. 
\end{align*} 
This allows us to define the pair of real entire functions $(\Phi_-,\Phi_+)$ of exponential type zero by 
\begin{align*}
  \Phi_\pm(z) = \prod_{n=1}^N (1\mp z\mu_n(x)),
\end{align*}
so that for $\re\,z>0$ one has the relation
\begin{align*}
  \Phi_\pm(z) = \E^{\pm \frac{x}{z}} f_\pm(\I/z,x) \prod_{n=1}^N (1+z\kappa_n).
\end{align*}
From this relation and property~\eqref{eqnJostCoup} of the Jost solutions, it follows that the pair $(\Phi_-,\Phi_+)$ satisfies the coupling condition 
\begin{align*}
 \Phi_-(\lambda) = \eta_x(\lambda) \Phi_+(\lambda) 
\end{align*}
for all $\lambda\in\sigma$. 
Moreover, the relation above also implies that  
\begin{align}\label{eqnDGreen}
  \frac{z\Phi_-(z)\Phi_+(z)}{W(z)} = -2 \biggl(\frac{f_+'(\I/z,x)}{f_+(\I/z,x)} - \frac{f_-'(\I/z,x)}{f_-(\I/z,x)}\biggr)^{-1}
\end{align}
for $\re\,z>0$, from which we are able to conclude that the left-hand side has non-negative imaginary part for all $z\in\C$ with $\im\,z>0$.
In fact, the quotients on the right-hand side are the half-line Weyl--Titchmarsh functions, which are known to be Herglotz--Nevanlinna functions (alternatively, one can also use~\cite[Lemma~1.1]{humBre16} for example).
As the functions $\Phi_-$ and $\Phi_+$ are clearly normalized at zero, we conclude that the pair $(\Phi_-,\Phi_+)$ is the solution of the coupling problem with data $\eta_x$. 
In order to obtain~\eqref{eqnqRep}, it remains to use~\eqref{eqnDGreen} and the large $k$ asymptotics  
\begin{align*}
   2\I k \biggl(\frac{f_+'(k,x)}{f_+(k,x)} - \frac{f_-'(k,x)}{f_-(k,x)}\biggr)^{-1} = 1 + \frac{q(x)}{2k^2} + \oo\biggl(\frac{1}{k^2}\biggr),
\end{align*}
 which are valid for continuous potentials~\cite{at81,dale91}. 
\end{proof}

\begin{remark}
 The representation of reflectionless integrable potentials via solutions of coupling problems in this section together with uniqueness of solutions to coupling problems established in~\cite{UniSolCP} provides an alternative proof of the fact that eigenvalues and norming constants uniquely determine a reflectionless integrable potential, which was originally proved in~\cite{hrmemy21}. 
\end{remark}

\section{Generalized soliton solutions}\label{secGSS}

The recent solution of the inverse spectral problem for reflectionless integrable potentials in~\cite{hrmemy21} provides a bijection between such potentials and a certain class of spectral data.  
By means of this correspondence, the time evolution of the spectral data 
\begin{align}
  \kappa_n(t) & = \kappa_n(0),  & c_n(t) & = \E^{8\kappa_n^3 t}c_n(0), & \gamma_{\pm,n}(t) & = \E^{\pm 4\kappa_n^3 t} \gamma_{\pm,n}(0),
\end{align}
 gives rise to classical solutions of the Korteweg--de Vries equation with arbitrary reflectionless integrable initial data; see~\cite[Section~6]{hrmemy21}. 
 We will refer to this kind of solutions as {\em generalized soliton solutions} in the following.
 In the special case when the underlying Schr\"odinger operator has only finitely many negative eigenvalues (equivalently, when the potential has sufficiently strong spatial decay), they are precisely the well-known $N$-soliton solutions. 

It follows readily from our considerations in Section~\ref{secRI} that generalized soliton solutions $q$ can be represented in terms of solutions to symmetric coupling problems, where the set $\sigma$ defined by 
 \begin{align}
   \sigma = \biggl\{-\frac{1}{\kappa_n},\,\frac{1}{\kappa_n}\,\bigg|\, n=1,\ldots,N\biggr\}.
 \end{align}
 is independent of time $t$ because so are the $\kappa_n$:
 {\em For each given point $x\in\R$ and time $t\in\R$ one has  
\begin{align}\label{eqnqxtRep}
  q(x,t) = - \frac{d^2}{dz^2} \frac{\Phi_-(z)\Phi_+(z)}{W(z)}\bigg|_{z=0},
\end{align}
where the pair $(\Phi_-,\Phi_+)$ is the solution of the symmetric coupling problem with data $\eta_{x,t}$ given by}
\begin{align}
  \eta_{x,t}\biggl(\frac{1}{\kappa_n}\biggr) = c_n(0) \E^{8\kappa_n^3 t -2\kappa_n x}. 
\end{align}
In combination with the results on the asymptotic behavior of solutions to symmetric coupling problems in Theorem~\ref{thmAA} and Corollary~\ref{corAA}, this representation allows us to derive long-time asymptotics for generalized soliton solutions next.   
    
\begin{theorem}\label{thmLT}
  Let $q$ be a generalized soliton solution of the Korteweg--de Vries equation with reflectionless integrable initial data. 
 Then one has the asymptotics  
 \begin{align}\label{eqnLTu}
  q(x,t) = \sum_{n=1}^N \frac{-2\kappa_n^2}{\cosh^2(\kappa_n x-4\kappa_n^3 t - \xi_n)} + \oo(1)  
 \end{align}
 uniformly in $x\in\R$ as $t\rightarrow\infty$, where the phase shifts $\xi_n$ are given by
 \begin{align}
   \xi_n = \frac{1}{2}\log|c_n(0)| - \frac{1}{2} \sum_{j=1}^{n-1} \log\biggl(\frac{\kappa_j+\kappa_n}{\kappa_j-\kappa_n}\biggr) +\frac{1}{2} \sum_{j=n+1}^N \log\biggl(\frac{\kappa_n+\kappa_j}{\kappa_n-\kappa_j}\biggr).
 \end{align}
\end{theorem}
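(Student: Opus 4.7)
The plan is to combine the representation~\eqref{eqnqxtRep} with the coupling asymptotics established in Theorem~\ref{thmAA} and Corollary~\ref{corAA} via a compactness-and-contradiction argument. Write $S(x,t)$ for the sum on the right-hand side of~\eqref{eqnLTu}. Before starting, I would check that $S$ is well-defined and uniformly small in the tail by the Weierstrass $M$-test: each summand is bounded by $2\kappa_n^2$, while $\sum_n \kappa_n^2 \leq (\sup_n \kappa_n)\sum_n \kappa_n < \infty$, so the tail $\sum_{n>N_0}$ is uniformly small in $(x,t)$, independently of whether $N$ is finite or infinite.

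I would then argue by contradiction. Suppose~\eqref{eqnLTu} fails, so there exist $\varepsilon>0$ and a sequence $(x_k,t_k)$ with $t_k\to\infty$ and $|q(x_k,t_k)-S(x_k,t_k)|\geq\varepsilon$. Introduce the exponents $u_{j,k}:=8\kappa_j^3 t_k - 2\kappa_j x_k$, so that $\eta_{x_k,t_k}(1/\kappa_j) = c_j(0)\E^{u_{j,k}}$. The crucial observation is that for any distinct $j\neq \ell$,
\[
  \frac{u_{j,k}}{2\kappa_j} - \frac{u_{\ell,k}}{2\kappa_\ell} = 4(\kappa_j^2-\kappa_\ell^2)\,t_k,
\]
which diverges in absolute value; hence at most one of the sequences $(u_{j,k})_k$ can remain bounded. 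Passing to a diagonal subsequence, I may assume every $u_{j,k}$ converges in $[-\infty,+\infty]$, and I let $n_0$ denote the unique index (if it exists) whose limit $u_\infty$ is finite. Since $(\kappa_j)$ is strictly decreasing, $u_{j,k}\to+\infty$ for $j<n_0$ and $u_{j,k}\to-\infty$ for $j>n_0$.

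If no such $n_0$ exists, then all coupling constants tend to $0$ or $\infty$, so by Corollary~\ref{corAA} the ratio $\Phi_{-}\Phi_{+}/W$ converges to $1$ locally uniformly on $\C\setminus\sigma$. By Cauchy's estimate for derivatives this gives $q(x_k,t_k)\to 0$; each summand of $S(x_k,t_k)$ also tends to $0$, and the uniform tail bound yields $S(x_k,t_k)\to 0$, a contradiction. If $n_0$ exists, I apply Theorem~\ref{thmAA} with $\lambda_0=1/\kappa_{n_0}$, $\sigma_\infty=\{1/\kappa_j : j<n_0\}$ and $\sigma_0=\{1/\kappa_j : j>n_0\}$. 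A direct computation of $\hat\eta_\infty$, using that $\kappa_{n_0}-\kappa_j<0$ for $j<n_0$ so that the absolute value converts the factor to $(\kappa_j-\kappa_{n_0})/(\kappa_j+\kappa_{n_0})$, gives
\[
  \tfrac{1}{2}\log|\hat\eta_\infty| = \xi_{n_0}+\tfrac{1}{2}u_\infty = -\lim_{k\to\infty}\bigl(\kappa_{n_0}x_k - 4\kappa_{n_0}^3 t_k - \xi_{n_0}\bigr).
\]
Differentiating the limiting expression in~\eqref{eqnConv} twice at $z=0$ (again legitimated by Cauchy's integral formula) then yields $q(x_k,t_k)\to -2\kappa_{n_0}^2/\cosh^2\bigl(\lim_k(\kappa_{n_0}x_k - 4\kappa_{n_0}^3 t_k - \xi_{n_0})\bigr)$, which equals $\lim_k$ of the $n_0$-th summand of $S(x_k,t_k)$. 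All remaining summands tend to $0$, and by the uniform tail bound $S(x_k,t_k)$ converges to the same value, contradicting $|q-S|\geq\varepsilon$.

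I expect the main obstacle to be the bookkeeping that the constant $\hat\eta_\infty$ supplied by Theorem~\ref{thmAA} reproduces exactly the phase shift $\xi_{n_0}$ of~\eqref{eqnLTu}: this requires careful sign-tracking in the logarithms to match the two products appearing in the definition of $\xi_n$. Everything else is a diagonal extraction plus an application of the two results from Section~\ref{secSCP}, together with the tail bound coming from $\sum_n \kappa_n<\infty$.
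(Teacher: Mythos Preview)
Your argument is correct and follows essentially the same route as the paper's proof: a contradiction argument along a sequence $(x_k,t_k)$, reduction via subsequences to the situation where all but possibly one pair of coupling constants escape to $0$ or $\infty$, then an application of Corollary~\ref{corAA} or Theorem~\ref{thmAA} together with~\eqref{eqnqxtRep}. The only cosmetic difference is that the paper organizes the subsequence extraction by the limit of $x_k/t_k$ (distinguishing whether it equals some $4\kappa_n^2$), whereas you pass directly to limits of the exponents $u_{j,k}$ and use the identity $u_{j,k}/(2\kappa_j)-u_{\ell,k}/(2\kappa_\ell)=4(\kappa_j^2-\kappa_\ell^2)t_k$ to see that at most one can stay finite; both lead to the same case split and the same phase-shift computation.
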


\begin{proof}
  Suppose that there is an $\varepsilon>0$, a sequence $t_k\rightarrow\infty$ and $x_k\in\R$ such that 
  \begin{align}\label{eqnuLTeps}
    \Biggl| q(x_k,t_k) + \sum_{n=1}^N \frac{2\kappa_n^2}{\cosh^2(\kappa_n x_k-4\kappa_n^3 t_k - \xi_n)} \Biggr| > \varepsilon
  \end{align}
  for all $k\in\N$. 
  By possibly passing to a suitable subsequence if necessary, we may assume that $x_k/t_k$ converges to $-\infty$, $+\infty$ or some $c\in\R$.
  In the cases when this limit is $-\infty$, $+\infty$ or $c$ with $c\not=4\kappa_n^2$ for all $n=1,\ldots,N$, one has that $\eta_{x_k,t_k}(\lambda)$ converges to zero or infinity for each $\lambda\in\sigma\cap(0,\infty)$. 
  We then infer from Corollary~\ref{corAA} and the representation in~\eqref{eqnqxtRep} that $q(x_k,t_k) \rightarrow 0$. 
  Since by dominated convergence we also have that 
  \begin{align*}
     \sum_{n=1}^N \frac{2\kappa_n^2}{\cosh^2(\kappa_n x_k-4\kappa_n^3 t_k - \xi_n)} & \rightarrow 0
  \end{align*}
  in these cases, this contradicts~\eqref{eqnuLTeps}. 
  Otherwise, when the limit $c$ is equal to $4\kappa_n^2$ for some $n$, one has that 
  \begin{align*}
    \eta_{x_k,t_k}\biggl(\frac{1}{\kappa_j}\biggr) \rightarrow \begin{cases} 0, & j>n, \\ \infty, & j<n. \end{cases}
  \end{align*}
   In this case, it follows from Theorem~\ref{thmAA} that 
  \begin{align*}
    q(x_k,t_k) & =  \frac{-2\kappa_n^2}{\cosh^2\bigl(\frac{1}{2}\log|\hat{\eta}_k|\bigr)} + \oo(1),
  \end{align*}
  where the $\hat{\eta}_k$ are given by 
 \begin{align*}
  \hat{\eta}_{k} = c_n(0)  \E^{8\kappa_n^3 t_k -2\kappa_n x_k} \prod_{j=1}^{n-1} \frac{\kappa_n - \kappa_j}{\kappa_n + \kappa_j} \prod_{j=n+1}^N \frac{\kappa_n + \kappa_j}{\kappa_n - \kappa_j}.
 \end{align*}
 As this readily turns into the asymptotics 
  \begin{align*}
    q(x_k,t_k) & =  \frac{-2\kappa_n^2}{\cosh^2(\kappa_n x_k-4\kappa_n^3 t_k - \xi_n)} + \oo(1)
  \end{align*}
  and one clearly also has 
  \begin{align*}
    \sum_{\substack{j=1\\j\not=n}}^N \frac{2\kappa_j^2}{\cosh^2(\kappa_j x_k-4\kappa_j^3 t_k - \xi_j)} & \rightarrow 0 
  \end{align*}
  in this case, we arrive at a contradiction to~\eqref{eqnuLTeps} again.   
\end{proof}

\end{document}